\documentclass[graybox]{SNmult}

\usepackage{amsfonts}
\usepackage{graphicx}
\usepackage{epstopdf}
\usepackage{algorithmic}
\usepackage{amsopn}
\usepackage{amsmath} 
\usepackage{bm} 

\usepackage{bm}
\usepackage{xspace}

\newcommand{\mr}[1]{\ensuremath{\mathrm{#1}}}

\newcommand{\fnc}[1]{\ensuremath{\mathit{#1}}}
\newcommand{\bfnc}[1]{\ensuremath{\bm{\mathit{#1}}}}

\newcommand{\U}[0]{\ensuremath{\bfnc{U}}}

\newcommand{\Efnc}[0]{\ensuremath{\fnc{E}}}

\usepackage{tikz}
\usepackage{mathrsfs}
\usepackage{subcaption}
\usepackage{xcolor}

\title*{Implicit BDF2 dual time-stepping positivity-preserving entropy-stable schemes for unsteady compressible viscous flows}
\titlerunning{Implicit BDF2 DTS positivity-preserving scheme for unsteady viscous flows}
\author{Mohammed Sayyari \and Nail K. Yamaleev}
\institute{Mohammed Sayyari and  Nail K. Yamaleev \at Department of Mathematics and Statistics, Old Dominion University, Norfolk, VA, \email{malsayya@odu.edu and nyamalee@odu.edu}}

\begin{document}
\maketitle

\abstract*{
  This paper presents a rigorous extension of the explicit, high-order, positivity-preserving, and entropy-stable spectral collocation schemes developed in ~\cite{upperman2023first,yamaleev2023high} for the 3D compressible Navier-Stokes equations to a time-implicit formulation. The time derivative terms are discretized by using the second-order implicit backward difference formula (BDF2) that is well suited for solving time-variable viscous flows at high Reynolds numbers. The nonlinear system of discrete equations resulting from the BDF2 discretization at each physical timestep is solved using a dual time-stepping (DTS) technique. The BDF2 DTS scheme is entropy-stable and positivity-preserving in the pseudotime and provides unconditional stability properties in the physical time. Numerical results demonstrate the efficiency and accuracy of the positivity-preserving BDF2 DTS scheme as compared with its explicit counterpart are presented for supersonic flows with strong shock waves and contact discontinuities. }

\section{Introduction}
\label{sec:introduction}

Hypersonic turbulent regimes are characterized by high temperatures and strong compressibility effects. These effects include shocks and contact discontinuities as primary sources of instability that can be remidied by entropy-stbility of compressible Navier--Stokes discretizations for admissible thermodynamic states. In high-order approximations, unresolved flow features can lead to undershoots and, in turn, violate the positivity of density and temperature required for entropy-stability. Thus, enforcing positivity is a prerequisite for entropy-stability. This motivates the development of positivity-preserving and entropy-stable schemes for practical shock-dominated simulations. Explicit Runge--Kutta (RK) positivity-preserving entropy-stable spectral collocation schemes for the compressible Navier-Stokes equations developed in~\cite{upperman2023first,yamaleev2023high} address these challenges. However, these explicit schemes suffer from the Courant-Friedrichs-Lewy (CFL)-type time step constraint, which becomes very restrictive due to the grid stiffness, thus significantly increasing the computational cost. This paper presents an implicit dual time-stepping (DTS) second-order backward difference formula (BDF2) positivity-preserving entropy-stable high-order spectral collocation scheme for solving the 3D compressible Navier--Stokes equations. These schemes provide unconditional stability in the physical time, and enjoy higher bounds in the explicit pseudotime, which allows for the increase of the overall efficiency as compared to the explicit counterpart developed in \cite{upperman2023first,yamaleev2023high}. Implicit DTS positivity-preserving entropy-stable high-order spectral collocation schemes were introduced in~\cite{sayyari2026implicit} for solving the 3D compressible Navier--Stokes equations centering the derivation on the BDF1 DTS scheme. This paper extends this development by presenting the full derivation and proof of positivity of thermodynamic variables for the positivity-preserving BDF2 DTS scheme. Then, the efficiency and accuracy of the BDF2 DTS scheme are assessed for unsteady viscous flows at high Mach and Reynolds numbers.


The paper is organized as follows. The regularized Navier--Stokes equations is defined in Section \ref{sec:rns}. The BDF2 DTS scheme is presented in \ref{sec:dual_time-stepping}. Then, the positivity-preserving semi-discrete operators are briefly described in Section \ref{sec:semi-discrete}. Lastly, the efficiency and accuracy of the BDF2 DTS and the explicit, positivity-preserving, and strong stability preserving (SSPRK3) scheme are compared in Section \ref{sec:results}.


\section{The regularized Navier--Stokes equations}
\label{sec:rns}


The compressible Navier--Stokes equations has no theoretical results showing that they intrinsically preserve the positivity of thermodynamic variables. In contrast, the Brenner--Navier--Stokes equations have been shown to guarantee global-in-time positivity of the thermodynamic variables~\cite{feireisl2010new}. To achieve this property, the approach introduced in~\cite{upperman2023first,yamaleev2023high} is followed. The Navier--Stokes equations are regularized by including artificial dissipation in the form of the diffusion operator of the Brenner-Navier-Stokes equations. The regularized Navier-Stokes (RNS) equations in curvilinear coordinates $(\xi_1,\xi_2, \xi_3)$ are given by
\begin{align}
    \label{eq:brenner-ad-Navier--Stokes}
    \frac{\partial (JU)}{\partial t}
    + \frac{\partial}{\partial \xi_d}
    \left[
        J \frac{\partial \xi_d}{\partial x_i}
    \right]
    {\bm F}^{(I)}_i
    = \frac{\partial}{\partial \xi_d}
    \left[
        J \frac{\partial \xi_d}{\partial x_i}
    \right]
    \left(
        {\bm F}^{(V)}_i
        + {\bm F}^{(AD)}_i
    \right),
\end{align}
where $\U = \left[\rho,\rho v_1,\rho v_2,\rho v_3,\rho\Efnc\right]^\top$ is a vector of the conservative variables ($\rho$- density, $\rho v_i$- momentum, and $\rho E$- total energy), $J=\left|\frac{\partial(x_1, x_2, x_3)}{\partial(\xi_1, \xi_2, \xi_3)} \right|$ is the metric Jacobian, and $(x_1, x_2, x_3)$ are the Cartesian coordinates. In Equation \eqref{eq:brenner-ad-Navier--Stokes} and hereinafter, the Einstein summation convention is used $\cdot_d\cdot_d = \sum_{d}\cdot_d\cdot_d$. The inviscid and viscous fluxes of the Navier--Stokes equations, ${\bm F}^{(I)}_i$ and ${\bm F}^{(V)}_i$, are given by
\begin{align}
    \label{eq:inviscid-viscous-fluxes}
    &{\bm F}^{(I)}_i
    = \begin{bmatrix}
        \rho v_i \\
        \rho v_1 v_i + P\delta_{1i} \\
        \rho v_2 v_i + P\delta_{2i} \\
        \rho v_3 v_i + P\delta_{3i} \\
        (\rho\Efnc + P)v_i
    \end{bmatrix},
    &&
    {\bm F}^{(V)}_i
    = \begin{bmatrix}
        0 \\
        \tau_{1i} \\
        \tau_{2i} \\
        \tau_{3i} \\
        \tau_{ji}v_j 
        + \kappa \frac{\partial T}{\partial x_i}
    \end{bmatrix},
\end{align}
where $\delta_{ij}$ is the Kronecker delta, $P=\rho R_g T$ is the pressure, $R_g$ is the gas constant, $T$ is the temperature, and $\kappa$ is the heat conductivity. The stress tensor is given by
\begin{equation}
    \nonumber
    \tau_{ji} = \mu \left( \frac{\partial v_j}{\partial x_i} + \frac{\partial v_i}{\partial x_j} - \frac{2}{3}\delta_{ji}\frac{\partial v_l}{\partial x_l}\right),
\end{equation}
where $\mu$ is the dynamic viscosity. The artificial dissipation flux, ${\bm F}^{(AD)}_i$, is defined as follows:
\begin{align}
    {\bm F}^{(AD)}_i
    &= {\bm F}^{(V)}_i\Big|_{\substack{\mu=c_{\rho}\mu^{AD}\\ \kappa = c_T\mu^{AD}}}
        + \sigma
        \begin{bmatrix}
            1 \\ v_1 \\ v_2 \\ v_3 \\ \Efnc 
        \end{bmatrix}
        \frac{\partial \rho}{\partial x_i}
\end{align}
where $\sigma = c_\rho\mu^{AD}/\rho$ and $\mu^{AD}$ is an artificial dissipation coefficient. For all test problems presented herein, the tunable coefficients, $c_\rho$ and $c_T$, are set equal to $0.9$ and $\frac{c_\rho}{\gamma-1}$, respectively. 

Similar to the compressible Navier--Stokes equations, the RNS equations is equipped with the Godunov entropy-pair $(\mathcal{S}, \mathcal{F}) = (-\rho s, -\rho s \bm{v})$. Assuming the corresponding boundary conditions are entropy-stable, the RNS equations satisfy the following entropy inequality~\cite{yamaleev2019entropy}:
\begin{equation}\label{eq:EinEquation3}
  \int_{\hat\Omega} \frac{\partial ( J\mathcal{S})}{\partial t}\mr{d}\hat\Omega
 = \frac{d}{d t} \int_{\hat\Omega} J \mathcal{S}\mr{d}\hat\Omega
  \leq 0.
\end{equation} 
Note that along with the entropy inequality given by Equation~\eqref{eq:EinEquation3}, the regularized Navier--Stokes equations~\eqref{eq:brenner-ad-Navier--Stokes} preserve the positivity of thermodynamic variables.


\section{The dual time-stepping method}
\label{sec:dual_time-stepping}

The time derivative terms of most positivity-preserving entropy stable schemes, such as the one developed in \cite{upperman2023first,yamaleev2023high}, are discretized by using explicit SSP Runge-Kutta methods. However, these explicit time integrators impose a CFL-type condition on the time step, which becomes stiff for high-Reynolds-number flow simulations. To eliminate this stiffness and preserve the positivity properties, a DTS technique based on the implicit BDF2 scheme is used. The implicit BDF2 scheme is A-stable for nonlinear problems and thus is well-suited for simulating stiff unsteady viscous flows at high Reynolds numbers. The BDF2 DTS scheme for \eqref{eq:brenner-ad-Navier--Stokes} can be written in the following semi-discrete form:
\begin{equation}\label{eq:pseudostep}
  \frac{\partial \hat{\bf u}^*}{\partial\tau} 
  = - \frac{3\hat{\bf u}^{*} - 4\hat{\bf u}^n + \hat{\bf u}^{n-1}}{2 \Delta t} 
  + {\bf R}^*,
\end{equation}
where $\hat{\bf u}^*$ is a semi-discrete steady-state solution in the pseudotime, the ${\bf R}^*$ term represents the spatial discretization used evaluated at $\hat{\bf u}^*$, $\Delta t$ is a physical time step size, and $n$ is the current physical time level. When the dual time derivative converges to zero, i.e., $(\hat{\bf u}^*)_{\tau} \rightarrow {\bf 0}$, $\hat{\bf u}^*$ converges to ${\hat{\bf u}}^{n+1}$ and Equation~\eqref{eq:pseudostep} becomes the standard BDF2 scheme. The update in pseudotime is achieved by taking a forward Euler step in the $\Delta \tau_k$ direction
\begin{equation}\label{eq:semi-implicit-update}
  \hat{\bf u}^{k+1} 
  = \hat{\bf u}^k 
  + \Delta\tau_k \left(
    - \frac{3\hat{\bf u}^{k+1} - 4\hat{\bf u}^n + \hat{\bf u}^{n-1}}{2 \Delta t} 
    + {\bf R}^k
  \right),
\end{equation}
where $\hat{\bf u}^{k}=[J]{\bf u}^{k}$, $\hat{\bf u}^{n}=[J]{\bf u}^{n}$ and $[J]$ is a diagonal matrix composed out of the metric Jacobian computed at the corresponding solution points. The explicit update formula is then obtained by writing the formula for $\hat{\bf u}^{k+1}$
\begin{equation}
 \label{eq:bdf2-DTS-update}
  \hat{\bf u}^{k+1}
  = \frac{2\Delta t_n}{2\Delta t_n+3\Delta \tau_k}
  \left(
    \hat{\bf u}^k 
    + 2 \frac{\Delta \tau_k}{\Delta t_n} \hat{\bf u}^n 
    - \frac12 \frac{\Delta \tau_k}{\Delta t_n} \hat{\bf u}^{n-1}
    + \Delta\tau_k{\bf R}^k
  \right).
\end{equation}
This update formula is used to converge the solution to the steady-state in the pseudotime, 
which can be interpreted as an iterative solver for solving the nonlinear discrete equations at each physical time step.

For the sake of brevity, the subscripts $k$ in $\Delta \tau_k$ is omitted and the following notation is used:
\begin{equation}
\label{eq:Ctau}
  \hat{\bf u}^{k,n} = \hat{\bf u}^k + 2 \frac{\Delta \tau}{\Delta t} \hat{\bf u}^n - \frac12 \frac{\Delta \tau}{\Delta t} \hat{\bf u}^{n-1};
  \quad C_2^{\tau} = \frac{2\Delta t}{2\Delta t+3\Delta \tau}.
\end{equation}
With this notation, the BDF2 DTS scheme given by Equation~\eqref{eq:bdf2-DTS-update} becomes
\begin{equation}
 \label{eq:bdf2-update}
 \hat{\bf u}^{k+1}
 = C_2^{\tau}\left(\hat{\bf u}^{k,n} + \Delta\tau{\bf R}^k
 \right).
\end{equation}


\section{Spatial discretization}
\label{sec:semi-discrete}

This section outlines the baseline semi-discrete first-order positivity-preserving entropy-stable finite volume (FV) scheme for the regularized Navier--Stokes equations (\ref{eq:brenner-ad-Navier--Stokes}) discretized on high-order hexahedral Legendre-Gauss-Lobatto (LGL) grids, introduced in \cite{upperman2023first} and a similar notation is utilized to ease the reference to this baseline scheme. The spatial and temporal operators in this baseline scheme are extended to attain high-order positivity-preserving properties and unconditional stability in the physical time for the BDF2 DTS scheme.

\subsection{Spatial operators}
\label{sec:sbp-operators}

The spatial derivatives in the regularized Navier-Stkes equations \eqref{eq:brenner-ad-Navier--Stokes} are discretized using summation-by-parts (SBP) operators to utilize their mimetic properties. The one-dimensional (1D) SBP operators used herein are briefly outlined and further details on how these operators can be generalized to fully unstructured grids in multiple spatial dimensions are provided in \cite{fernandez2014review}.  The physical domain is divided into $N_{\rm elem}$ non-overlapping discontinuous elements, $[x_1^j, x_{N_p}^j]$, such that $x_1^j=x_{N_p}^{(j-1)}$ for $j=2,\dots,N_{\rm elem}$. The solution of order $p$ in each cell is approximated on $N_p=p+1$ LGL points, ${\bf x}^j = \left[x_1^j, \dots, x_{N_p}^j \right]^\top$ (referred to as solution points) for $j=1,\dots,N_{\rm elem}$. This representation provides us with a set of operators, including a quadrature, $\mathcal{P}$, a 1st-derivative differentiation operator, $\mathcal{D}$, and a stiffness matrix, $\mathcal{Q}$. Only the diagonal-norm LGL operators are considered herein. The main properties of these operators for a fixed order $p$ are as follows.
\begin{enumerate}
  \item For any vector ${\bf x}^l = \left[x_1^l,\dots, x_{N_p}^l\right]^\top$ and powers $l=0, 1,\dots, p$, $\mathcal{D}{\bf x}^l = \mathcal{P}^{-1}\mathcal{Q}{\bf x}^l = l{\bf x}^{l-1}$.
  \item $\mathcal{P}$ is a symmetric positive definite (SPD) matrix.
  \item $\mathcal{Q}+\mathcal{Q}^\top=\mathcal{B}$, where $\mathcal{B} = {\rm diag}(-1,0,\dots,0,1)$.
\end{enumerate}

Along with the solution points, an additional set of intermediate points is used, $\bar{\bf x}^j = \left[\bar{x}_0^j, \dots, \bar{x}_{N_p}^j \right]^\top$ for $j=1,\dots,N_{\rm elem}$. These points, which are referred to as flux points, form a complementary grid whose spacing is equal to the diagonal elements of the positive definite mass matrix $\mathcal{P}$, i.e., 
\begin{equation}\label{eq:fluxpoints}
\bar{x}_{i} - \bar{x}_{i-1} = \mathcal{P}_{ii} \ {\rm for} \ i=1,\dots, N_p. 
\end{equation}
The flux points are instrumental for constructing the first-order positivity-preserving entropy-stable scheme defined on high-order LGL elements, which will be discussed in Section \ref{sec:first-order}. As has been proven in \cite{fisher2013discretely}, any 1D SBP discrete differentiation operator $\mathcal{D} = \mathcal{P}^{-1} \mathcal{Q}$ presented above can be recast into the following telescopic flux form:
\begin{equation}
\nonumber
  \mathcal{P}^{-1} \mathcal{Q} {\bf f} 
  = \mathcal{P}^{-1} \Delta \bar{\bf f},
\end{equation}
where $\Delta$ is a $N_p\times (N_p +1)$ matrix corresponding to the two-point backward difference operator, and $\bar{\bf f}$ is a $p$th-order flux vector defined at the flux points~\cite{fisher2013discretely,carpenter2014entropy,fisher2011boundary}. 

Hereafter, the multidimensional SBP operators defined in the computational domain are denoted with subscripts $\cdot_{d}$, where $d$ is the $d$-th computational coordinate for $d = 1, 2, 3$. Because the scheme is developed for three spatial dimensions, with each coordinate defined by the index $d$, and there are $N_p$ LGL points in each direction, the notation $i_d$ is used to denote the $i$-th LGL point and $\overline{i}_d$ to denote the $i$-th flux point in the $d$-th curvilinear coordinate. The notation $\cdot_{i_1,i_2,i_3}$ is used to refer to a specific quantity at a point, where $i_1$, $i_2$, and $i_3$ are the indices of the quantity in the first, second, and third coordinates, respectively.


\subsection{First-order positivity-preserving entropy-stable scheme}
\label{sec:first-order}

The first-order entropy stable scheme is constructed in a finite volume manner on the high-order LGL solution points to guarantee the positivity of thermodynamic variables in the presence of strong discontinuities. The flux points act as control volume edges and can be written in the semi-discrete form as follows:
\begin{equation}\label{eq:first-order} 
  (\hat{{\bf u}}_1)_t
  = 
  \left(
  -[\mathcal{P}^{-1} \Delta]_{d} \left[ 
  \hat{\bar{{\bf f}}}^{1(I)}_d 
  - \hat{\bar{{\bf f}}}^{1(AD)}_{\hat{\bar{\sigma}},d}
  - \hat{\bar{{\bf f}}}^{1(AD)}_d 
  \right] 
  + \mathcal{D}_{d}\hat{{\bf f}}^{p(V)}_d \right)
  + 
  \mathcal{P}^{-1}_{d} 
  \hat{{\bf g}}^1_d, 
\end{equation} 
where $\hat{\bf u}_1= [J] {\bf u}_1$, $[J]$ is diagonal matrix composed out of the metric Jacobian evaluated at the corresponding solution points, $\hat{\bar{{\bf f}}}^{1(I)}_d$, $\hat{\bar{{\bf f}}}^{1(AD)}_{\hat{\bar{\sigma}},d}$, and $\hat{\bar{{\bf f}}}^{1(AD)}_d$ are first-order inviscid and artificial dissipation fluxes, $\hat{{\bf f}}^{p(V)}_d$ is a high-order physical viscous flux associated with the $d$-th coordinate, and $\hat{{\bf g}}^1_d$ represents inviscid, viscous, and artificial dissipation penalties \cite{upperman2023first}. Note that the discretization of the first-order inviscid fluxes on high-order LGL elements satisfies the geometric conservation law (GCL) equations \cite{upperman2023first,thomas1979geometric}. Further details on the construction of the fluxes and penalties in Equation~\eqref{eq:first-order} can be found in \cite{upperman2023first}. 

The fully discrete form of the first-order positivity-preserving entropy-stable BDF1 DTS scheme is obtained by substituting the right-hand side of Equation~\eqref{eq:first-order} into Equation~\eqref{eq:bdf2-update}, leading to the following update formula:
\begin{equation}
 \label{eq:bdf2-update-first}
 \hat{\bf u}_1^{k+1}
 = C_2^{\tau}\left(\hat{\bf u}_1^{k,n} + \Delta\tau{\bf R}_1^k
 \right).
\end{equation}


\subsection{The positivity of thermodynamic variables}
\label{sec:first-order-positivity}

The positivity of density and internal energy can only be shown for the first-order scheme~\eqref{eq:first-order}. The coefficient $C_2^{\tau}$ defined in Equation~\eqref{eq:Ctau} is always positive, thus, the positivity of density can be proven for the following quantity:
\begin{equation}
  \label{eq:positivity-density}
  \frac{\hat{\rho}_1^{k+1}}{C_2^{\tau}} 
  = \hat{\rho}_1^{k,n} + \Delta\tau{\bf R}_1^{\rho,k},
  \quad\quad
  {\bf R}_1^{\rho,k} =
  - \frac{
    \hat{\bar{f}}^{\rho+}_{i_d}
    - \hat{\bar{f}}^{\rho-}_{i_d}
  }{\Delta \bar{\xi}_{i_d}},
\end{equation}
provided that $\hat{\rho}_1^{k,n}$ is in the admissible set, where $\Delta \bar{\xi}_{i_d}=\bar{\xi}_{i_d+1}-\bar{\xi}_{i_d}$ is the distance between the neighboring flux points in the computational domain. The numerical fluxes $\hat{\bar{f}}^{\rho\pm}_{i_d}$ are defined as follows:
\begin{equation}\label{eq:flux}
 \hat{\bar{f}}^{\rho\pm}_{i_d}
 = \hat{\bar{m}}_{i_d}^{\pm} - \mathscr{D}_{i_d}^{\pm} \Delta_{i_d}^{\pm} {\rho},
\end{equation}
where $\Delta_{i_d}^+{\rho}={\rho}_{i_d+1}-{\rho}_{i_d}$ and $\Delta_{i_d}^-{\rho}={\rho}_{i_d}-{\rho}_{i_d-1}$, $\hat{\bar{m}}_{i_d}^{+}$ and $\hat{\bar{m}}_{i_d}^{-}$ are the momentums associated with the entropy conservative flux based on ${\bf u}_1(\xi_{i_d})$, ${\bf u}_1(\xi_{i_d+1})$ and  ${\bf u}_1(\xi_{i_d-1})$, ${\bf u}_1(\xi_{i_d})$, respectively, $\mathscr{D}_{i_d}^{\pm}$ is the corresponding dissipation coefficient whose minimum value is given by $\mathscr{D}_{i_d, \min}^{\pm} = \frac{|\hat{\bar{m}}^{\pm}_{i_d}|}{2{\rho}_{i_d,A}^{\pm}}$, with ${\rho}_{i_d,A}^{\pm}=({\rho}_{i_d}+{\rho}_{i_d\pm 1})/2$ (further details are available in~\cite{upperman2023first}).
\begin{theorem}
  If the $\hat{\bar{f}}^{\rho\pm}_{i_d}$ flux is defined by Equation~(\ref{eq:flux}) with $\mathscr{D}^{\pm}_{i_d} \geq \mathscr{D}^{\pm}_{{i_d},\min}= \frac{|\hat{\bar{m}}^{\pm}_{i_d}|}{2{\rho}_{i_d,A}^{\pm}}$, then the first-order BDF2 DTS scheme given by Eq.~(\ref{eq:bdf2-update}) preserves the positivity of density under the following constraint on $\Delta \tau$: 
  \begin{equation}
    \label{eq:density-BDF2}
    \Delta\tau < 
    \min\limits_{i_1, i_2, i_3}\frac{1}
    {\frac 2{J_{i_1i_2i_3}} 
    \frac{\mathscr{D}^{+}_d + \mathscr{D}^{-}_d}{\Delta \bar{\xi}_d}
    - \left(
      \frac{2}{\Delta t} \frac{\rho_1^{n}}{\rho_1^k}
      - \frac{1}{2\Delta t} \frac{\rho_1^{n-1}}{\rho_1^k}
    \right)}
    = \Delta\tau^{\rho}.
  \end{equation}
\end{theorem}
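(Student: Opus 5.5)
Since $C_2^{\tau}>0$, it suffices to show that the right-hand side of Eq.~\eqref{eq:positivity-density} is positive at every solution point. We work pointwise at $(i_1,i_2,i_3)$ and write $\hat\rho = J_{i_1i_2i_3}\rho$. Using the definition of $\hat{\bf u}^{k,n}$ in Eq.~\eqref{eq:Ctau}, we write
\begin{equation*}
\frac{\hat\rho_1^{k+1}}{C_2^\tau} = J\rho_1^k\Big(1 + \frac{2\Delta\tau}{\Delta t}\frac{\rho_1^n}{\rho_1^k} - \frac{\Delta\tau}{2\Delta t}\frac{\rho_1^{n-1}}{\rho_1^k}\Big) - \frac{\Delta\tau}{\Delta\bar\xi_{i_d}}\big(\hat{\bar f}^{\rho+}_{i_d}-\hat{\bar f}^{\rho-}_{i_d}\big),
\end{equation*}
with summation over $d$. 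We assume as an inductive hypothesis that $\rho_1^k,\rho_1^n,\rho_1^{n-1}>0$ at all points.

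The key step is to split the flux difference into a part proportional to $\rho_{i_d}$ and parts proportional to the neighbors $\rho_{i_d\pm1}$. Following \cite{upperman2023first}, we use $\rho_{i_d,A}^{\pm}=(\rho_{i_d}+\rho_{i_d\pm1})/2$ to write $\hat{\bar m}^{\pm}_{i_d}=\frac{\hat{\bar m}^{\pm}_{i_d}}{2\rho^{\pm}_{i_d,A}}(\rho_{i_d}+\rho_{i_d\pm1})$. Substituting this into Eq.~\eqref{eq:flux} gives
\begin{equation*}
-\hat{\bar f}^{\rho+}+\hat{\bar f}^{\rho-} = \rho_{i_d+1}\Big(\mathscr D^+ - \tfrac{\hat{\bar m}^+}{2\rho_A^+}\Big) + \rho_{i_d-1}\Big(\mathscr D^- + \tfrac{\hat{\bar m}^-}{2\rho_A^-}\Big) - \rho_{i_d}\Big(\mathscr D^+ + \mathscr D^- + \tfrac{\hat{\bar m}^+}{2\rho_A^+} - \tfrac{\hat{\bar m}^-}{2\rho_A^-}\Big).
\end{equation*}
The hypothesis $\mathscr D^\pm\ge |\hat{\bar m}^\pm|/(2\rho_A^\pm)$ makes both neighbor coefficients nonnegative, so those terms can be dropped from a lower bound. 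In the self term we bound the momentum contributions by $\pm\hat{\bar m}^{\pm}/(2\rho^\pm_A)\le \mathscr D^\pm$. A cruder but simpler bound is to write $\hat{\bar f}^{\rho\pm}$ so that the $\rho_{i_d}$ coefficient is at most $2(\mathscr D^++\mathscr D^-)$; this is where the factor $2$ in Eq.~\eqref{eq:density-BDF2} comes from.

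The self coefficient is multiplied by $\Delta\tau/\Delta\bar\xi_d$, and the physical metric factors sit inside the fluxes. After dividing by $J$ we therefore obtain
\begin{equation*}
\frac{\hat\rho_1^{k+1}}{C_2^\tau} \ge J\rho_1^k\Big[1 - \Delta\tau\Big(\frac{2}{J}\frac{\mathscr D^+_d+\mathscr D^-_d}{\Delta\bar\xi_d} - \frac{2}{\Delta t}\frac{\rho_1^n}{\rho_1^k} + \frac{1}{2\Delta t}\frac{\rho_1^{n-1}}{\rho_1^k}\Big)\Big].
\end{equation*}
The bracket is positive exactly when $\Delta\tau<\Delta\tau^\rho$ at that point, where $\Delta\tau^\rho$ is defined in Eq.~\eqref{eq:density-BDF2}. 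If the denominator in Eq.~\eqref{eq:density-BDF2} is nonpositive, positivity holds for any $\Delta\tau$. Taking the minimum over all $(i_1,i_2,i_3)$ gives the claim.

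The main obstacle is the BDF2 term $-\tfrac12\frac{\Delta\tau}{\Delta t}\hat\rho^{n-1}$, which has a negative sign. It cannot be absorbed into the "admissible" $\hat\rho^{k,n}$ in the way the BDF1 case allows. It has to be kept in the denominator of Eq.~\eqref{eq:density-BDF2}, which is why the denominator contains the ratio $\rho^{n-1}/\rho^k$. A second point needing care is tracking where $J$ and the metric terms enter the flux scaling, so that the factor $2/J$ comes out as stated.
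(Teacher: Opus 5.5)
Your proof is correct and is essentially the paper's argument. Both use $\mathscr D^\pm\ge|\hat{\bar m}^\pm|/(2\rho_A^\pm)$ to bound each one-sided flux contribution below by $-2\Delta\tau\,\mathscr D^\pm\rho^k_{i_d}/\Delta\bar\xi_d$, then combine this with the expanded BDF2 terms of $\hat\rho^{k,n}$ and factor out $J\rho^k$. The only cosmetic difference is in how the neighbour densities are handled: the paper splits $\hat\rho^{k,n}$ into six pieces and uses $\mp\hat{\bar m}^\pm\ge-|\hat{\bar m}^\pm|$ so that $\rho_{i_d\pm1}$ cancels exactly, whereas you write $\hat{\bar m}^\pm$ as a multiple of $\rho_{i_d}+\rho_{i_d\pm1}$ and drop the nonnegative neighbour terms, which relies on $\rho^k_{i_d\pm1}>0$ (already in your inductive hypothesis).
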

\begin{proof}
  The update formula~\eqref{eq:positivity-density} is split into the sum of positive and negative flux contributions in three dimensions {resulting in six flux terms (two for each spatial dimension). Thus, $\hat{\rho}_1^{k,n}$ is split into six contributions and added to each flux term. Since the flux contributions are repeated per dimension, only one representative dimension is shown explicitly here} as follows:
  \begin{equation}
    \label{eq:positivity-density-split}
    \frac{\hat{\rho}_1^{k+1}}{C_2^{\tau}}
    = \left(
        {\frac{\hat{\rho}_1^{k,n}}{6}} 
        - \Delta\tau
        \frac{
            \hat{\bar{f}}^{\rho+}_{i_d}
        }{\Delta \bar{\xi}_{i_d}}
    \right)
    + \left(
        {\frac{\hat{\rho}_1^{k,n}}{6}}
        + \Delta\tau
        \frac{
            \hat{\bar{f}}^{\rho-}_{i_d}
        }{\Delta \bar{\xi}_{i_d}}
    \right)
    + \cdots.
  \end{equation}
  The positive and negative flux contributions can be written as:
  \begin{equation*}
    \left(
      \frac{\hat{\rho}^{k,n}}{6} 
      \mp \Delta\tau\frac{\hat{\bar{f}}^{\rho\pm}_{i_d}}{\Delta \bar{\xi}_{i_d}}
    \right) = 
    \frac{\hat{\rho}^{k,n}}{6} 
    \mp \Delta\tau\frac{1}{\Delta \bar{\xi}_{i_d}}
    (\hat{\bar{m}}^\pm_{i_d} - \mathscr{D}^\pm_{i_d}\Delta^\pm_{i_d}\rho).
  \end{equation*}
  The momentum terms are scalars and thus {can} be bounded by $\mp\hat{\bar{m}}^\pm_{i_d} \geq -|\hat{\bar{m}}^\pm_{i_d}|$. Now, by using the definition of $\mathscr{D}^{\pm}_{i_d} \geq \frac{|\hat{\bar{m}}^{\pm}_{i_d}|}{2{\rho}_{1,i_d,A}^{\pm}}$, the positive and negative flux contributions can be bounded as follows:
  \begin{align}
  \nonumber
    \begin{split}
      &\frac{\hat{\rho}^{k,n}}{6} 
      \mp \Delta\tau\frac{1}{\Delta \bar{\xi}_{i_d}}
      (\hat{\bar{m}}^\pm_{i_d} - \mathscr{D}^\pm_{i_d}\Delta^\pm_{i_d}\rho)
      \geq \frac{\hat{\rho}^{k,n}}{6} 
      - \Delta\tau\frac{\mathscr{D}^\pm_{i_d}}{\Delta \bar{\xi}_{i_d}}
      \left(\frac{|\hat{\bar{m}}^\pm_{i_d}|}{\mathscr{D}^\pm_{i_d}} \mp \Delta^\pm_{i_d}\rho\right)\\
      &\hspace{14em}
      \geq \frac{\hat{\rho}^{k,n}}{6} 
      - \Delta\tau\frac{\mathscr{D}^\pm_{i_d}}{\Delta \bar{\xi}_{i_d}}
      \left(2\rho^\pm_{j,A} \mp \Delta^\pm_{i_d}\rho\right)\\
      &\hspace{14em}
      = \frac{\hat{\rho}^{k,n}}{6} 
      - \Delta\tau\frac{2\mathscr{D}^\pm_{i_d}}{\Delta \bar{\xi}_{i_d}}
      \rho^k\\
      &= \rho^k \left[
        \frac{J_{i_1i_2i_3}}{6} - \Delta\tau\left(
          \frac{2\mathscr{D}^+_{i_d}}{\Delta \bar{\xi}_{i_d}}
          - \frac{J_{i_1i_2i_3}}{6} 
            \left[
              \frac{2}{\Delta t} \frac{\rho_1^{n}}{\rho_1^k}
              - \frac{1}{2\Delta t} \frac{\rho_1^{n-1}}{\rho_1^k}
            \right]
        \right)  
      \right],
    \end{split}
  \end{align}
  where $2\rho^\pm_{j,A} \mp \Delta^\pm_{i_d}\rho = \rho_{i_d} + \rho_{i_d\pm1} - (\rho_{i_d\pm1} - \rho_{i_d}) = 2\rho_{i_d}^k$. Summing the six contributions in~\eqref{eq:positivity-density-split} gives the following bound on $\Delta\tau$:
  \begin{equation*}
    \frac{\hat{\rho}_1^{k+1}}{C_2^{\tau}}
    \geq \rho^k \left[
      J_{i_1i_2i_3} - \Delta\tau\left(
        2\frac{\mathscr{D}^+_{i_d} + \mathscr{D}^-_{i_d}}{\Delta \bar{\xi}_{i_d}}
        - J_{i_1i_2i_3}
          \left[
            \frac{2}{\Delta t} \frac{\rho_1^{n}}{\rho_1^k}
            - \frac{1}{2\Delta t} \frac{\rho_1^{n-1}}{\rho_1^k}
          \right]
      \right)  
    \right] > 0{,}
  \end{equation*}
  Because $\rho^k>0$, the positivity of density is guaranteed if~\eqref{eq:density-BDF2} holds.
\end{proof}


Building on that, by substituting $\hat{\bm u}_1^{k+1}(\vec{\xi}_{i_1i_2i_3}) = J_{i_1i_2i_3} {\bm u}_1^{k+1}$, the internal energy at the $k+1$ pseudotime level can be determined by substituting ${\bm u}_1^{k+1}$ into $\rho E_1^{k+1} = \rho e_1^{k+1} \rho_1^{k+1} + \frac{(m_1^{k+1})^2}{2\rho_1^{k+1}}$ at each solution point, thus leading to the following inequality for $\Delta\tau$ provided that all the conditions of Equation~\eqref{eq:density-BDF2} are satisfied:
\begin{equation}
 \label{eq:internal-energy-BDF2}
 \frac{\rho e_1^{k+1}\rho_1^{k+1}}{{C_2^{\tau}}^2} 
 = A\left(\frac{\Delta\tau}{J}\right)^2 
 + B\frac{\Delta\tau}{J} 
 + C>0,
\end{equation}
where $\rho e_1^{k+1}$ is the total internal energy of ${\bm u}_1^{k+1}$. The coefficients of the quadratic trinomial can be computed using Equation~(\ref{eq:bdf2-update}) as follows:
\begin{equation}
  \begin{array}{ll}
    A &= ({R}_1^{E} {R}_1^{\rho})^k - \frac{1}{2} \left \| ({\bm R}_1^{m})^k \right \|^2
    +\left(\frac{2}{\Delta t}{{\bm u}}_1^{n}-\frac{1}{2\Delta t}{{\bm u}}_1^{n-1}\right)^\top \left[ 
      \begin{array}{l}
      \phantom{-} {R}_1^{E}  \\
      -{\bm R}_1^{m}  \\
      \phantom{-} {R}_1^{\rho} \\
      \end{array}
    \right]^k \\
    &+ \left(\frac{2}{\Delta t}\rho_1^{n}-\frac{1}{2\Delta t}\rho_1^{n-1}\right)
    \left(\frac{2}{\Delta t}(\rho E)_1^{n}-\frac{1}{2\Delta t}(\rho E)_1^{n-1}\right)
    - \left\|\frac{2}{\Delta t}{\bm m}_1^{n}-\frac{1}{2\Delta t}{\bm m}_1^{n-1}\right\|^2, \\ \\
    B &= \left({{\bm u}}_1^{k}\right)^\top\left[ 
      \begin{array}{l}
      \phantom{-} {R}_1^{E}  \\
      -{\bm R}_1^{m}  \\
      \phantom{-} {R}_1^{\rho} \\
      \end{array}
    \right]^k
    + \left(\frac{2}{\Delta t}{{\bm u}}_1^{n}-\frac{1}{2\Delta t}{{\bm u}}_1^{n-1}\right)^\top\left[ 
      \begin{array}{l}
      \phantom{-} \rho E_1   \\
      -{{\bm m}}_1  \\
      \phantom{-} \rho_1 \\
      \end{array}
    \right]^k, \\ \\
    C &= (\rho e)_1^k\rho_1^k,
  \end{array}
\end{equation}
where ${R}_1^{\rho}$, $\bm{R}_1^{m}$, ${R}_1^{E}$ are the right-hand sides of Equation~\eqref{eq:first-order} associated with the continuity, momentum, and energy equations, respectively, and $\left\| \cdot \right\|$ is the Euclidean norm in $\mathbb{R}^3$. Thus, if the solution ${\bm u}_1^k$ at the previous pseudostep is in the admissible set, then the quadratic trinomial has the positive vertical intercept $C$, which implies that there always exists $\Delta \tau:$ $\Delta\tau^{\rho} \ge \Delta \tau > 0$ such that the inequality Equation~(\ref{eq:internal-energy-BDF2}) holds and the present BDF2 DTS scheme preserve the positivity of both internal energy and density. To eliminate the stiffness of the constraint on $\Delta \tau$ required for the positivity of the internal energy in regions where the solution loses its regularity, the entropy-stable velocity and temperature limiters developed in~\cite{upperman2023first} are used. Note that these limiters and the proof of their entropy stability are independent of the temporal discretization and can be directly used for the present BDF2 DTS scheme without any modifications.


\subsection{High-order positivity-violating scheme}
\label{sec:high-order-entropy-stable}

The high-order entropy-stable scheme cannot preserve the positivity of thermodynamic variables because the high-order dissipation operators do not satisfy the maximum principle. Thus, the first-order scheme given by Equation~\eqref{eq:first-order} is combined with its high-order positivity-violating counterpart such that the resultant scheme is positivity preserving, entropy-stable, and high-order accurate in regions where the solution is sufficiently smooth. The positivity-violating entropy-stable scheme is constructed by discretizing the spatial derivatives in RNS Equation~\eqref{eq:brenner-ad-Navier--Stokes} using the high-order spectral collocation operators defined on the same $p$th-order LGL solution points used for the first-order FV scheme. This semi-discrete high-order scheme is given by 
\begin{equation}\label{eq:semi-discrete-ssdc}
  \left(\hat{{\bf u}}_p\right)_t = 
  - 
  [\mathcal{P}^{-1} \Delta]_{d}\hat{\bar{{\bf f}}}^{p(I)}_d 
  + \mathcal{D}_{d} \left[
    \hat{{\bf f}}^{p(V)}_d 
    + \hat{{\bf f}}^{p(AD)}_d 
  \right]
  + 
  \mathcal{P}^{-1}_{d} 
    \hat{{\bf g}}^p_d,
\end{equation} 
where $\hat{{\bf u}}_p = [J] {\bf u}_p$, $\hat{\bar{{\bf f}}}^{p(I)}_d$ for $d=1, 2, 3$ are the $p$th-order contravariant inviscid entropy conservative fluxes defined at the flux points, and $\hat{{\bf g}}^p_d$ includes the boundary, interface and artificial dissipation penalty terms. The full definitions and extended discussion of these fluxes and penalty terms are available in~\cite{carpenter2016entropy,carpenter2014entropy,upperman2022positivity}. 

The fully discrete variant of the above scheme with the implicit BDF2 DTS discretization in the physical time is given by
\begin{equation}
 \label{eq:bdf2-update-high}
 \hat{\bf u}_p^{k+1}
 = C_2^{\tau}\left(\hat{\bf u}_p^{k,n} + \Delta\tau{\bf R}_p^k
 \right).
\end{equation}
This fully discrete high-order spectral collocation scheme is conservative and stable in the entropy sense. {Conservation} follows immediately from the telescopic flux form of the inviscid terms and the SBP form of the viscous and artificial dissipation terms. The entropy-stability of the spatial Navier--Stokes terms in Equation~(\ref{eq:semi-discrete-ssdc}) is proven in~\cite{carpenter2014entropy}, and the entropy dissipation properties of the artificial dissipation terms are shown in~\cite{upperman2022positivity, upperman2023first}.


\subsection{DTS High-order positivity--preserving flux-limiting scheme}
\label{sec:high-order} 

To construct the DTS high-order positivity-preserving entropy-stable scheme, the first-order positivity-preserving \eqref{eq:bdf2-update-first} and high-order positivity-violating \eqref{eq:bdf2-update-high} schemes are combined on each LGL element by using the flux-limiting technique developed in~\cite{yamaleev2023high} as follows:
\begin{align}
  \begin{split}\label{eq:limiting-solution}
    \hat{\bf u}^{k+1}(\theta_f) 
    &= C_2^{\tau}\left(\hat{\bf u}^{k,n} 
    + \Delta\tau \left[
      \theta_f {\bf R}^k_p
      + (1-\theta_f) {\bf R}^k_1
    \right] \right) {,}
  \end{split}
\end{align}
where the flux limiter $\theta^k_f $ $(0 \le \theta^k_f \le 1)$ is a constant on each element \cite{yamaleev2023high} and $\hat{\bf u}^{k+1}_p = \hat{\bf u}^{k+1}|_{\theta_f=1}$ and $\hat{\bf u}^{k+1}_1 = \hat{\bf u}^{k+1}|_{\theta_f=0}$ are $p$th- and 1st-order numerical solutions, respectively, which are defined on the same LGL elements with the same high-order metric terms.

Since $0<C_2^{\tau}<1$, a proof that the high-order scheme given by Equation~(\ref{eq:limiting-solution}) guarantees pointwise positivity of density and temperature is nearly identical to that presented in~\cite{yamaleev2023high} for the same flux-limiting entropy-stable scheme with the explicit Euler discretization in the physical time. The entropy-stability of the flux-limiting scheme \eqref{eq:limiting-solution} follows immediately from the fact that this hybrid scheme is a linear convex combination of two entropy-stable schemes on each high-order element. Further details of the positivity and entropy-stability of the high-order flux-limiting scheme can be found in~\cite{yamaleev2023high}.


\section{Numerical Results}
\label{sec:results}

The accuracy and efficiency of the present DTS BDF2 spectral collocation scheme and the SSPRK3 scheme developed in~\cite{yamaleev2023high} are compared on two benchmark supersonic unsteady viscous flow problems. For all test problems considered, the flow quantities are non-dimensionalized as follows: $t=\frac{t'}{L^*/v^*}$, $x_j=\frac{x_j'}{L^*}$, $\rho=\frac{\rho'}{\rho^*}$, $v_i=\frac{v_i'}{v^*}$, $\mu=\frac{\mu'}{\mu^*}$, $p=\frac{p'}{p^*}$, $R=\frac{R'}{R^*}$, $E=\frac{E'}{\gamma c_{\rm v}^* T^*}$, $\kappa=\frac{\kappa'}{\kappa^*}$ and $c_{\rm v}=\frac{c_{\rm v}'}{c_{\rm v}^*}$. Thus, the non-dimensional pressure, total and kinetic energy variables are given by
\begin{align}
    &p=\rho T, &E=\frac{1}{\gamma} T + E_k, &&E_k=\frac{(\gamma-1)M_{\infty}^2}{2}u^2.
\end{align}

Convergence criteria of the BDF2 DTS inner-loop are based on the relative and absolute errors that are defined as follows
\begin{align}
    &\epsilon_{abs}
    = \frac{\|{\bf u}^{k+1}-{\bf u}^k\|_{L^2}}{\Delta\tau},
    &&\epsilon_{rel}
    = \frac{\|{\bf u}^{k+1}-{\bf u}^k\|_{L^2}}
    {\|{\bf u}^{1}-{\bf u}^0\|_{L^2}}
    \frac{\Delta\tau_0}{\Delta\tau}.
\end{align}


\subsection{2D cylinder flow at $\mathbf{M_{\infty}=17.605}$}
\label{subsec:hypersonic-cylinder}


\begin{figure}[!t]
    \centering
    \includegraphics[width=0.9\textwidth]{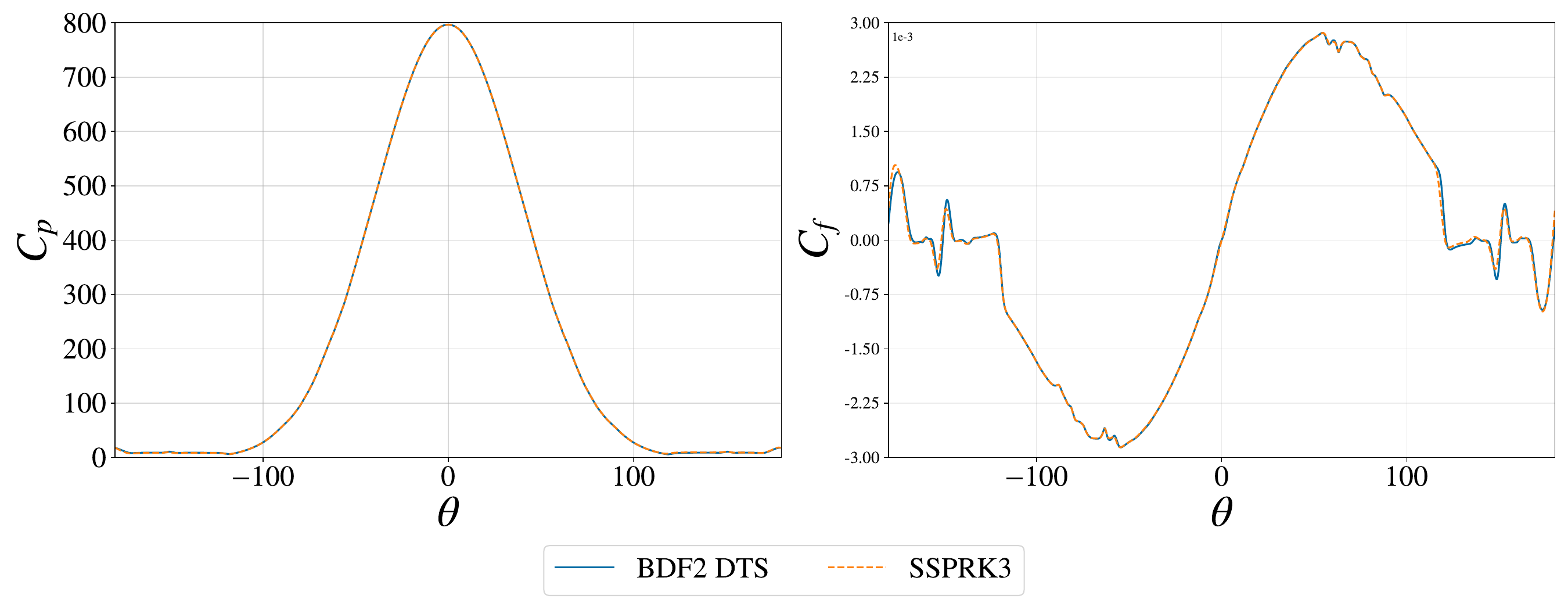}
    \caption{Time-averaged wall pressure (left panel) and skin friction coefficient computed using the BDF2 DTS and the explicit SSPRK3 $p=5$ schemes for the hypersonic cylinder flow at $M_{\infty}=17.605$ on the $N_\text{elem}=55,216$ element grid.}
    \label{fig:hypcyl-55k-p5-comparison}
\end{figure}


\begin{figure}[!t]
    \centering
    \begin{minipage}{0.45\textwidth}
        \centering
        \includegraphics[width=\textwidth]{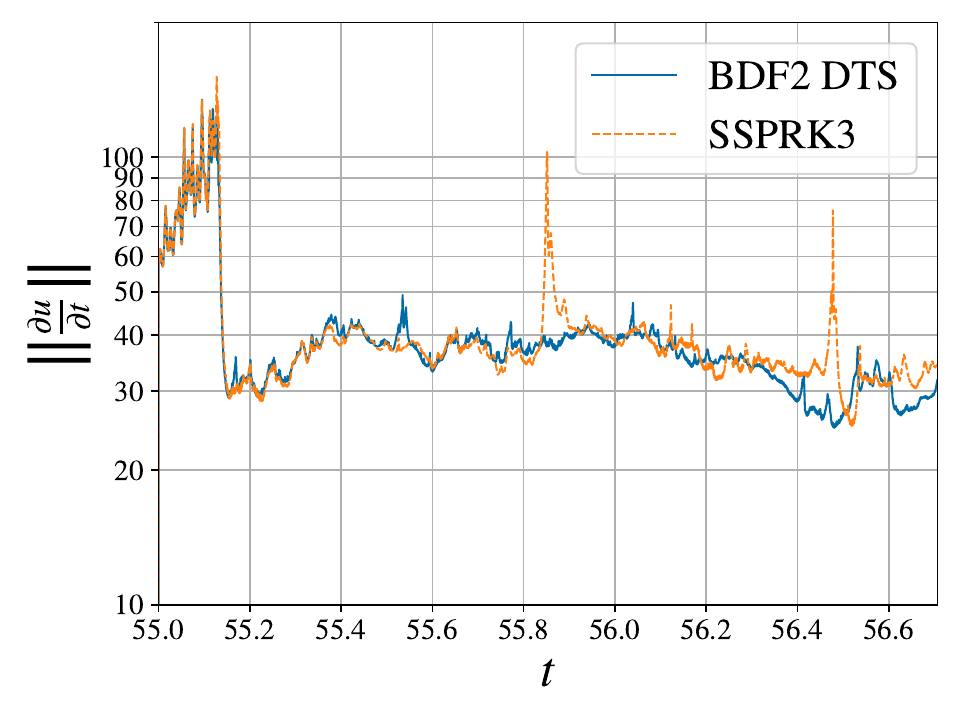}
    \end{minipage}
    \hfill
    \begin{minipage}{0.45\textwidth}
        \caption{Time histories of the residual norm obtained with the BDF2 DTS and explicit SSPRK3 $p=5$ schemes for the hypersonic cylinder flow at $M_{\infty}=17.605$ on the $N_\text{elem}=55,216$ element grid.}
        \label{fig:hypcyl-55k-p5-dudt}
    \end{minipage}
\end{figure}


\noindent
The first test problem is the hypersonic flow around a cylinder, which is a standard benchmark problem for verifying robustness and accuracy of high-order numerical schemes for simulating compressible viscous flows with strong discontinuities, where the positivity of thermodynamic variables plays a critical role. The 2D hypersonic flow around a cylinder is simulated at $M_{\infty}=17.605$ and $Re_{\infty}=376,930$. The cylinder center is located at $(x,y)=(0,0)$, and its radius is set equal to $r=0.5$. The computational domain is a rectangle: $-2\leq x\leq 3$ and $-2\leq y\leq 2$. The supersonic inflow and outflow boundary conditions are imposed on the left and right boundaries, respectively, while the supersonic freestream boundary conditions are used at the top and bottom boundaries. The entropy-stable no-slip boundary conditions developed in~\cite{dalcin2019conservative} are imposed on the cylinder wall. For this test problem, a grid with $N_{\rm elem}=55,216$, $p=5$ elements is used. This grid is stretched in the direction normal to the cylinder wall so that the wall grid spacings is $\Delta r = 1.33\times 10^{-3}$. The flow is initialized with the constant freestream flow, whose velocity vector is gradually reduced to zero at the cylinder wall. The problem is solved numerically using the implicit BDF2 DTS and explicit SSPRK3 schemes. The test case is run until $t=55$, corresponding to the time when the wake becomes fully developed. After that, the test problem is integrated for additional $5$ nondimensional time units to compute time-averaged quantities.

The time-averaged wall pressure and skin friction coefficients computed using the implicit BDF2 DTS and explicit SSPRK3 schemes are compared in Figure~\ref{fig:hypcyl-55k-p5-comparison}. As shown in the figure, the pressure coefficient obtained with the present scheme is practically identical to that of the SSPRK3 solution. Practically, the same accuracy is achieved for the skin friction coefficient, which is much more sensetive quantity because it depends on the solution gradient. Though the physical time step size of the BDF2 scheme is on average $20$ times higher than that of the explicit counterpart. 

Time histories of the $L_2$ residual norms of the BDF2 DTS and SSPRK3 schemes are compared in Figure~\ref{fig:hypcyl-55k-p5-dudt}. As can be see in this figure, the BDF2 residual norm closely follows that of the SSPRK3 scheme with the exception of two pronounced peaks at $t=55.85$ and $56.45$. These deviations can be explained by the much larger time step size used in the implicit scheme compared with its explicit counterpart.

The present BDF2 DTS scheme provides nearly the same accuracy as the explicit third-order scheme for this hypersonic viscous flow with strong discontinuities. Note that for this test problem, the tolerence used for determining convergence to the steady-state in the pseudotime varies in the interval from $\epsilon_\text{rel}=2 \times 10^{-2}$ to $5 \times 10^{-2}$. As seen in Figures~\ref{fig:hypcyl-55k-p5-comparison}-\ref{fig:hypcyl-55k-p5-dudt}, this convergence error in the pseudotime has no appreciable effect on the solution accuracy in the physical time.


\subsection{3D supersonic Taylor-Green vortex flow}
\label{subsec:taylor-green-vortex}


\begin{figure}[!t]
    \centering
    \includegraphics[width=0.9\textwidth]{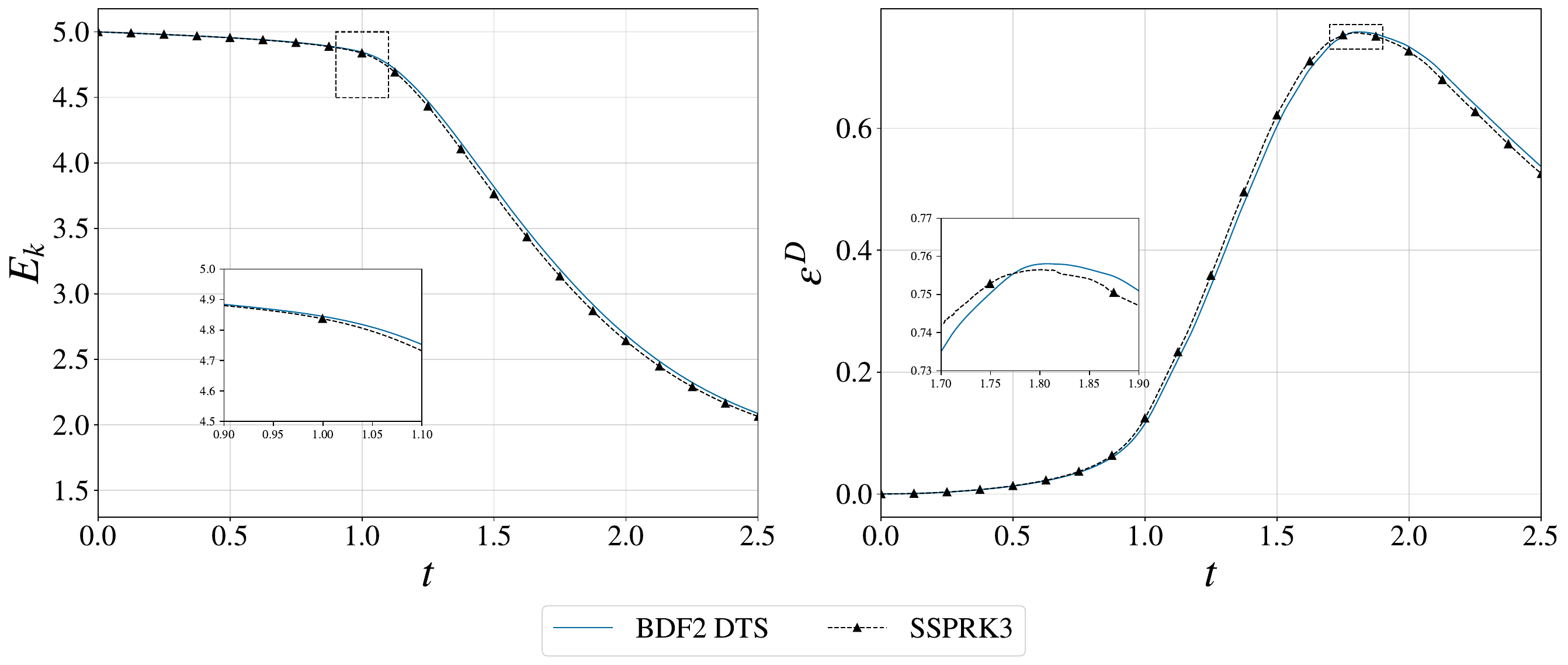}
    \caption{Time histories of the total kinetic energy (left-panel) and dilation computed with the BDF2 DTS and SSPRK3 $p=6$ schemes on the $N_\text{elem}=64^3$ grid for the TGV flow at $Re_{\infty}=400$ and $M_{\infty}=10.0$.}
    \label{fig:tgv-Re400-Ma10}
\end{figure}


\begin{figure}[!t]
    \centering
    \includegraphics[width=0.9\textwidth]{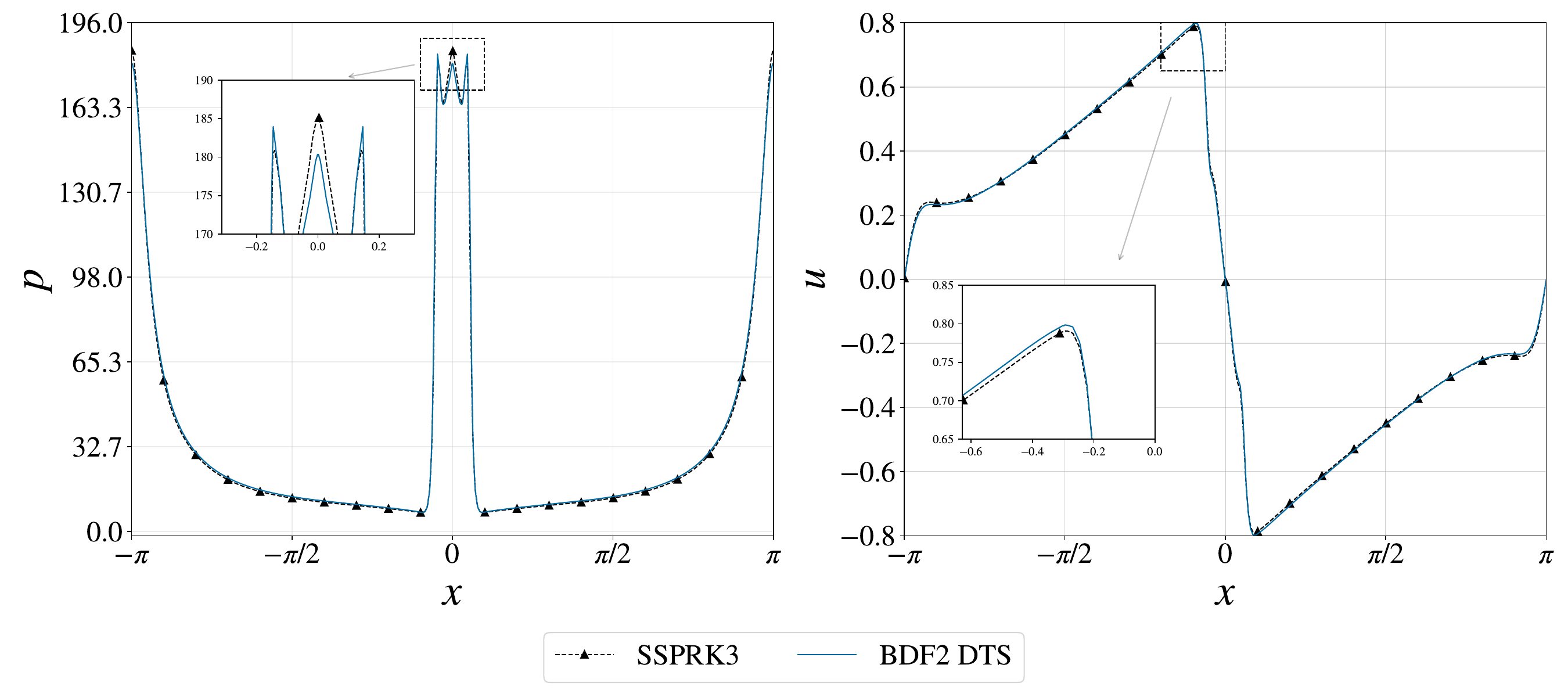}
    \caption{Pressure (left-panel) and $x$-component of the velocity vector profiles along the line $y=\pi$ and $z=0$ obtained with the BDF2 DTS and SSPRK3 $p=6$ schemes on the $N_\text{elem}=64^3$ grid for the TGV flow at $Re_{\infty}=400$ and $M_{\infty}=10.0$.}
    \label{fig:tgv-Re400-Ma10-primitives}
\end{figure}


\noindent
The second test problem is the 3D Taylor--Green vortex (TGV) flow at the Reynolds number of $Re_{\infty}=400$ and Mach number $M_{\infty} = 10$. This problem is considered to assess the performance of the present BDF2 DTS scheme for the essentially unsteady turbulent flow with strong discontinuities. This problem is solved on the periodic cube ($-\pi \leq x,y,z \leq \pi$) with the following initial conditions: 
\begin{align}
    \begin{split}\label{eq:tgv-Re400-init}
        p(x,y,z) &= 1 + \frac{\gamma Ma^2}{89.6}(\cos 2x + \cos 2y) (\cos 2z + 2),\\
        V_x(x,y,z) &= \sin x \cos y \cos z,\\
        V_y(x,y,z) &= - \cos x \sin y \cos z,\\
        V_z(x,y,z) &= 0,\\
        T(x,y,z) &= 1.
    \end{split}
\end{align}
The initial density state is computed as $\rho(x,y,z) = p/T$. The following quantities are measured to evaluate accuracy and convergence properties of the BDF2 DTS scheme:  
\begin{equation}
    E_k = \frac{(\gamma-1)Ma^2}{|\Omega|}\int_\Omega\frac{\rho v_i v_i}{2} \text{d}\Omega,
\end{equation}
\begin{equation}
\label{dilation}
    \varepsilon^D = \frac{4(\gamma-1)Ma^2}{3Re|\Omega|}\int_\Omega\mu(T)\left(\frac{\partial v_j}{\partial x_j}\right)^2 \text{d}\Omega,
\end{equation}
where  $E_k$ is a total kinetic energy, and $\varepsilon^D$ is the dilational contributions to the viscous dissipation rate of the kinetic energy.

Since no reference solutions or experimental data are available in {the} literature for the $Ma_{\infty}=10$ case, the results of the BDF2 DTS scheme are compared with those of the positivity-preserving SSPRK3 scheme developed in~\cite{yamaleev2023high}. Figure~\ref{fig:tgv-Re400-Ma10} shows time histories of the total kinetic energy and the dilational component of the kinetic energy dissipation rate computed with the BDF2 and SSPRK3 schemes. The results obtained with the BDF2 DTS and SSPRK3 $p=6$ schemes on the $N_\text{elem}=64^3$ grid are nearly identical. This comparison shows that the present BDF2 DTS scheme provides high temporal accuracy for gradient quantities such as the dilational component, which is very sensitive to strong shock waves and their interaction with vortices. In Figure~\ref{fig:tgv-Re400-Ma10-primitives}, snapshots of the pressure and $x$-component of the velocity vector profiles along the line $(x,y,z)=(x,\pi,0)$ are compared. This figure shows that the BDF2 DTS solution is slightly more dissipative than that computed with the SSPRK3 on the same grid.


\begin{table}[!t]
    \centering
    \caption{The total number of explicit steps and wall clock time average over three runs required for the BDF2 DTS and SSPRK3 positivity-preserving schemes to advance the numerical solution over $0.1$ time units in the physical time on the the $N_\text{elem}=64^3$ $p=6$ grid for the TGV flow at $Re_{\infty}=400$ and $M_{\infty}=10.0$.}
    \label{tab:tgv-run-stats}
    \begin{tabular}{p{0.3\textwidth}|p{0.2\textwidth}p{0.2\textwidth}c}
        \centering Solver         
        &\centering SSPRK3
        &\centering BDF2 DTS              
        & \\
        \hline
        \centering $\Delta t$     
        &\centering $\sim5\times 10^{-5}$
        &\centering $2\times 10^{-3}$             
        & \\
        \centering $\Delta\tau$   
        &\centering --                    
        &\centering $\sim6\times 10^{-5}$ 
        & \\
        \centering Explicit $(\hat{\bm u})_t$ calls 
        &\centering $5925$                
        &\centering $5249$                
        & \\
        \centering Wall clock time       
        &\centering $66,683s$           
        &\centering $48,619s$            
        & \\
    \end{tabular}
\end{table}


To assess the efficiency of the proposed DTS BDF2 scheme as compared with its explicit counterpart, both schemes are run on the same $N_{\rm elem} = 64^3$ and $p = 6$ grid for an additiona $0.1$ nondimensional time units starting from the final time $t=2.5$. These schemes were run on the Old Dominion University's Turing cluster at a fixed 256-core on exclusive coreV3 nodes to eliminate resource contention and isolate algorithmic performance. The total number of explicit steps and the wall clock time are measured to compare the efficiency of both schemes, which are summarized in Table~\ref{tab:tgv-run-stats}. 

The BDF2 scheme uses a fixed time step of $\Delta t = 2\times 10^{-3}$. For the SSPRK3 scheme, the time step is controlled by the positivity-preserving conditions, and an average of $\sim 5\times 10^{-5}$ was observed over the time period. The total number of explicit steps for the BDF2 DTS scheme is $5249$, while for the SSPRK3 scheme it is $5925$. The wall clock time average over three runs for the BDF2 DTS and SSPRK3 schemes are $48,619$ and $66,683$ seconds, respectively. Thus, the BDF2 DTS scheme demonstrates an $11.4\%$ reduction in the number of explicit steps and a $27\%$ reduction in the wall clock time relative to the SSPRK3 scheme, while maintaining the comparable accuracy. {Since the positivity-preserving mechanisms are only active when necessary, the extra $11\%$ RHS calls are not uniform in wall-clock time throughout the simulation and may raise the cost disproportionately.}

\begin{remark}
    A reasonable question is to ask whether the second-order SSPRK2 scheme would be more efficient than the BDF2 DTS scheme. The SSPRK2 scheme was also run on the same grid using the exact settings. The total number of explicit steps for the SSPRK2 scheme is $3868$, while the wall clock time average over three runs is $51,648$ seconds. When compared to the BDF2 DTS scheme, the SSPRK2 scheme shows a $\%5.8$ increase in the wall clock time. Therefore, it is still less efficient than the BDF2 DTS scheme without assessing its accuracy.
\end{remark}


\section{Conclusions}
\label{sec:conclusions}


In this paper, the explicit SSPRK3 positivity-preserving entropy-stable spectral collocation schemes of arbitrary spatial order of accuracy introduced in \cite{yamaleev2023high} for the 3D compressible Navier-Stokes equations {are extended} to an implicit DTS formulation based on the BDF2 time integrator. The proposed dual time-stepping scheme combines unconditional stability properties of the implicit BDF2 time integrator with the positivity-preserving and entropy stability properties of the baseline explicit spectral collocation scheme, {while providing design-order accuracy in physical time.} This DTS methodology guarantees the positivity of thermodynamic variables at each pseudotime iteration and imposes no constraints on the physical time step size. The accuracy and efficiency of the present DTS BDF2 positivity-preserving entropy-stable spectral collocation scheme are assessed using two benchmark problems: the hypersonic flow around a cylinder and the 3D supersonic TGV flow. The results obtained with the present BDF2 DTS scheme are compared with those of the explicit SSPRK3 scheme for the same spatial order of accuracy and grids. For both test problems, the BDF2 DTS scheme provides accuracy comparable to that of the SSPRK3 scheme. Furthermore, for the 3D supersonic TGV flow, the BDF2 DTS scheme provides close to $27\%$ reduction in the wall clock time as compared with the SSPRK3 scheme without sacrificing the solution accuracy.



\section*{Acknowledgments}
The second author gratefully acknowledges the support from Department of Defense through grant W911NF2310183. 


\bibliographystyle{spmpsci}
\bibliography{references}

\end{document}